\documentclass[12pt]{amsart}

\usepackage{amssymb,latexsym,amscd}
\usepackage{amsthm,amsmath,mathrsfs}
\usepackage{youngtab}

\textheight23cm
\textwidth17cm
\voffset-1cm
\hoffset-2,3cm

\newcommand{\cV}{\mathcal{V}}
\newcommand{\cH}{\mathcal{H}}
\newcommand{\cU}{\mathcal{U}}

\newcommand{\cO}{\mathcal{O}}

\newcommand{\cF}{\mathcal{F}}

\newcommand{\cB}{\mathcal{B}}
\newcommand{\cD}{\mathcal{D}}

\newcommand{\cL}{\mathcal{L}}

\newcommand{\cC}{\mathcal{C}}

\newcommand{\cS}{\mathcal{S}}

\newcommand{\lra}{\longrightarrow}

\newcommand{\ra}{\rightarrow}

\newcommand{\PP}{\mathbb{P}}

\newcommand{\YY}{\mathcal{Y}}

\newcommand{\ZZ}{\mathbb{Z}}

\newcommand{\CC}{\mathbb{C}}


\newcommand{\slfr}{\mathfrak{sl}}


\theoremstyle{plain}

\newtheorem{thm}{Theorem}[section]

\newtheorem{lem}[thm]{Lemma}

\newtheorem{prop}[thm]{Proposition}

\newtheorem{cor}[thm]{Corollary}

\newtheorem{rem}[thm]{Remark}

\newtheorem{defi}[thm]{Definition}

\newtheorem{ex}[thm]{Example}

\let\mathbb=\mathbf

\begin{document}

\title[]{Strange duality revisited}

\begin{abstract}
We give a proof of the strange duality or rank-level duality of the WZW models of conformal
blocks by extending the genus-$0$ result, obtained by Nakanishi-Tsuchiya in 1992, to higher genus curves
via the sewing procedure. The new ingredient of the proof is an explicit use of the branching rules of the 
conformal embedding of affine Lie algebras $\widehat{\slfr(r)} \times \widehat{\slfr(l)} \subset
\widehat{\slfr(rl)}$. We recover the strange duality of spaces of generalized theta functions obtained by
Belkale, Marian-Oprea, as well as by Oudompheng in the parabolic case. 
\end{abstract}

\author{Christian Pauly}

\address{Laboratoire de Math\'{e}matiques J. A. Dieudonn\'e \\ Universit\'e de Nice - Sophia Antipolis \\
06108 Nice Cedex 02 \\ France}

\email{pauly@unice.fr}

\thanks{This research was supported by a Marie Curie Intra European Fellowship within the 7th European Community
Framework Programme}


\subjclass[2000]{Primary 14D20, 14H60, 17B67, 81T40}

\maketitle

\bigskip

\section{Introduction}

One of the most significant recent results in the theory of vector bundles over curves is the proof of the
strange duality or rank-level duality given by Belkale and Marian-Oprea. For a survey of the results
we refer e.g. to \cite{MO2}, \cite{Pa2}, \cite{Po}.  In this note we give another proof of that duality
in the framework of conformal blocks. In fact, the duality statement for conformal blocks over the
projective line was proved by Nakanishi-Tsuchiya in 1992 \cite{NT}. We generalize their
statement to a smooth projective curve of any genus. 

\bigskip

In order to state the Main Theorem we need to introduce some notation. Let $r,l \geq 2$ be integers. We denote
by $P_l(r)$ the finite set of dominant weights at level $l$ of the Lie algebra $\slfr(r)$ and for
$\lambda \in P_l(r)$ we denote by  
$\cH_{\lambda,l}(r)$ the irreducible integrable $\widehat{\slfr(r)}$-module of weight $\lambda$ and
level $l$, where $\widehat{\slfr(r)}$ is the affine Lie algebra associated to $\slfr(r)$.
The new ingredient of the proof is the use of the branching rules for the conformal embedding
$\widehat{\slfr(r)} \times \widehat{\slfr(l)} \subset \widehat{\slfr(rl)}$ of the irreducible integrable 
$\widehat{\slfr(rl)}$-module $\cH_{\lambda,1}(rl)$, when $\lambda$ is a (possibly zero) fundamental 
dominant weight $\slfr(rl)$. The branching rule \cite{H} gives the decomposition 
$$ \cH_{\lambda,1}(rl) = \bigoplus_{ Y \in \YY_{r,l}^{aff}(\lambda)} \cH_{\mu,l}(r) \otimes 
\cH_{{}^t\mu,r}(l), $$
where $Y$ varies over a finite set  $\YY_{r,l}^{aff}(\lambda) \subset \YY_{r,l}^{aff}$ of Young diagrams of 
type $(r,l)$ and of size $\lambda$ (for the definitions see section \ref{Young}). The dominant weights 
$\mu$ and ${}^t\mu$ of $\slfr(r)$ and $\slfr(l)$ are naturally associated to $Y$ and to its transpose ${}^tY$.
The above decomposition is an infinite-dimensional analogue of the classical Skew Cauchy Formula 
(see e.g. \cite{Pr}) giving
the branching rule of the representation $\Lambda^\lambda \left( \CC^r \otimes \CC^l \right)$ 
for the embedding of finite Lie algebras 
$\slfr(r) \times \slfr(l) \subset \slfr(rl)$. Given a smooth projective curve $C$ with $n$ marked points
and a collection $\vec{\lambda} = (\lambda_1, \ldots, \lambda_n) \in P_l(r)^n$ we denote by 
$\cV^\dagger_{\vec{\lambda},l} (C,r)$ the corresponding conformal block. With this notation
we now can state the

\bigskip

{\bf Main Theorem.} {\em  Let $C$ be a smooth projective complex curve with $n$ marked points and let
$$\vec{\lambda} = (\lambda_1, \ldots, \lambda_n) \in P_1(rl)^n = \{ 0, 1,  \ldots, rl-1 \}^n$$ 
be a labelling of the marked points with fundamental weights of $\slfr(rl)$ satisfying the condition
$\sum_{i=1}^n \lambda_i \equiv 0 \  \mathrm{mod} \ rl$. 
For any collection of Young diagrams
$$\vec{Y} = (Y_1, \ldots, Y_n) \in \prod_{i=1}^n \YY_{r,l}^{aff}(\lambda_i)$$ 
we denote by $\vec{\mu} = \pi (\vec{Y}) \in P_l(r)^n$ and
${}^t\vec{\mu} = \pi ({}^t\vec{Y}) \in P_r(l)^n$ the collections of associated 
dominant weights of $\slfr(r)$ and $\slfr(l)$ respectively. Then the natural linear map
between spaces of conformal blocks over the pointed curve $C$ obtained via the conformal embedding
$\widehat{\slfr(r)} \times \widehat{\slfr(l)} \subset
\widehat{\slfr(rl)}$

$$ \alpha:  \cV^\dagger_{\vec{\lambda},1} (C, rl) \lra
\cV^\dagger_{\vec{\mu},l} (C, r) \otimes \cV^\dagger_{{}^t \vec{\mu},r} (C, l) $$
induces an {\em injective} linear map
$$ SD_{\vec{Y}} : \cV_{\vec{\mu},l} (C,r) \lra
\cV_{\vec{\lambda},1} (C, rl) \otimes \cV^\dagger_{{}^t \vec{\mu},r} (C, l). $$
}

\bigskip

We recall that there is a canonical isomorphism (up to homothety) between the space of conformal
blocks associated to a $1$-pointed curve labelled with the trivial weight $\lambda_1 = 0$
\begin{equation} \label{cbiso}
\cV^\dagger_{0,l}(C,r) \cong H^0(\mathcal{SU}_C(r), \cL^l) 
\end{equation}
and the so-called space of generalized theta functions of rank $r$ and level $l$, i.e., the space
of global sections of the $l$-th power of the determinant line bundle $\cL$ over the
coarse moduli space $\mathcal{SU}_C(r)$ of semi-stable rank-$r$ vector 
bundles with fixed trivial determinant over the
curve $C$. In the special case of a $1$-pointed curve labelled with the trivial weight and
$Y_1 = 0, \mu_1 = 0, {}^t\mu_1 = 0$ the above theorem combined with the isomorphism
\eqref{cbiso} states that the linear map
$$ SD_0 : H^0(\mathcal{SU}_C(r), \cL^l)^\dagger  \lra 
H^0(\mathcal{SU}_C(rl), \cL)^\dagger  \otimes H^0(\mathcal{SU}_C(l), \cL^r) $$
is injective.  We denote by $\cU_C^*(r)$ the coarse moduli space of semi-stable rank $r$ and degree $r(g-1)$
vector bundles over $C$ and by $\Theta$ the divisor $\{ E \in \cU_C^*(r) \ | \ \dim H^0( C,E) \not= 0 \} 
\subset \cU_C^*(r)$. The tensor product map
$\cU_C^*(1) \times \mathcal{SU}_C(l) \ra \cU_C^*(l)$ induces an inclusion
$$H^0(\cU_C^*(l), \cO(r\Theta)) \subset H^0(\cU_C^*(1), \cO(rl \Theta)) \otimes 
H^0(\mathcal{SU}_C(l), \cL^r) \cong 
H^0(\mathcal{SU}_C(rl), \cL)^\dagger  \otimes H^0(\mathcal{SU}_C(l), \cL^r).$$
The last isomorphism is proved in \cite{BNR}. It is shown in \cite{Be2} that the image of the
linear map $SD_0$ is contained in $H^0(\cU_C^*(l), \cO(r\Theta))$. Hence, assuming the well-known fact that
both vector spaces have the same dimension, we obtain a new proof
of the following theorem.

\begin{thm} [\cite{Be1}, \cite{MO}]
For any smooth curve $C$, the linear map
$$SD_0 : H^0(\mathcal{SU}_C(r), \cL^l)^\dagger  \lra H^0(\cU_C^*(l), \cO(r\Theta))$$
is an isomorphim.
\end{thm}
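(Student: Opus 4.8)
The plan is to deduce the theorem formally from the injectivity statement of the Main Theorem together with a dimension count, so that the real content lies in results already established rather than in the final argument.

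First I would specialize the Main Theorem to a $1$-pointed curve carrying the trivial weight $\lambda_1 = 0$, with $Y_1 = 0$ and hence $\mu_1 = {}^t\mu_1 = 0$. Combined with the canonical isomorphism \eqref{cbiso} identifying conformal blocks with spaces of generalized theta functions, this produces an injective linear map
$$ SD_0 : H^0(\mathcal{SU}_C(r), \cL^l)^\dagger \lra H^0(\mathcal{SU}_C(rl), \cL)^\dagger \otimes H^0(\mathcal{SU}_C(l), \cL^r). $$
Next I would use the containment, established in \cite{Be2}, that the image of $SD_0$ lies in the subspace $H^0(\cU_C^*(l), \cO(r\Theta))$, embedded in the target through the tensor product map $\cU_C^*(1) \times \mathcal{SU}_C(l) \ra \cU_C^*(l)$ together with the isomorphism of \cite{BNR}. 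Corestricting $SD_0$ to this subspace does not affect injectivity, so we obtain an injective map $SD_0 : H^0(\mathcal{SU}_C(r), \cL^l)^\dagger \lra H^0(\cU_C^*(l), \cO(r\Theta))$.

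Finally, both source and target are finite-dimensional, so it suffices to check that their dimensions coincide: an injective linear map between finite-dimensional vector spaces of equal dimension is automatically bijective. The dimension of $H^0(\mathcal{SU}_C(r), \cL^l)$ is computed by the Verlinde formula, while the dimension of $H^0(\cU_C^*(l), \cO(r\Theta))$ is given by the corresponding expression with the roles of $r$ and $l$ exchanged; the equality of these two numbers is precisely the symmetry of the Verlinde numbers under rank-level exchange, which is the well-known fact invoked in the statement.

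The steps involving $SD_0$ and the dimension comparison are essentially formal once the inputs are in hand. The genuine obstacle lies entirely upstream: in the Main Theorem itself, namely the injectivity of the map $\alpha$ arising from the conformal embedding, for which the branching rule of $\widehat{\slfr(r)} \times \widehat{\slfr(l)} \subset \widehat{\slfr(rl)}$ and the sewing procedure are the essential tools, and in the containment of the image inside $H^0(\cU_C^*(l), \cO(r\Theta))$ proved in \cite{Be2}. I therefore expect no difficulty in the final passage to an isomorphism beyond correctly assembling these three ingredients.
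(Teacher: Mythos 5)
Your proposal is correct and follows essentially the same route as the paper: specialize the Main Theorem to a $1$-pointed curve with trivial weight to get injectivity of $SD_0$ via the isomorphism \eqref{cbiso}, invoke the containment of the image in $H^0(\cU_C^*(l), \cO(r\Theta))$ from \cite{Be2}, and conclude by the equality of dimensions (rank-level symmetry of the Verlinde numbers). The paper presents exactly these three ingredients in its introduction, so there is nothing to add.
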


Note the map $SD_0$ is defined here in terms of conformal blocks, but it coincides 
(see \cite{Be2} Proposition 5.2) 
under the isomorphism
\eqref{cbiso} with the one defined at the level of moduli spaces of vector bundles.

\bigskip

The parabolic version of Theorem 1.1 proved by Oudompheng \cite{O} Theorem 4.10 can be
similarly deduced from our Main Theorem by using the parabolic version of the isomorphism
\eqref{cbiso} proved in \cite{Pa1}. We leave the details to the reader.

\bigskip

The paper is organised as follows. In sections 2,3 and 4 we collect for the reader's convenience
some known results on conformal blocks, on the branching rules and on the WZW-connection. The proof
of the Main Theorem is given in section 5.

\bigskip

I would like to thank Laurent Manivel for helpful comments.

\bigskip

\section{Conformal blocks and factorization}

\subsection{Definition and properties of conformal blocks}

Given an integer $l \geq 1$ called the level, we introduce the finite set of dominant weights of $\slfr(r)$
$$P_l(r) = \{ \lambda = \sum_{i=1}^{r-1} a_i \varpi_i \ | \ \sum_{i=1}^{r-1} a_i \leq l ; \ a_i \geq 0 \}, $$
where the $\varpi_i$ denote the $r-1$ fundamental weights of $\slfr(r)$. We also consider the involution 
of the set $P_l(r)$  
\begin{equation} \label{invodual}
\lambda = \sum_{i=1}^{r-1} a_i \varpi_i \ \mapsto \ 
\lambda^\dagger  = \sum_{i=1}^{r-1} a_{r-i} \varpi_i.
\end{equation}

We denote by $V_\lambda$ the irreducible $\slfr(r)$-module with dominant weight $\lambda$. Then $\lambda^\dagger$
is the dominant weight of the dual $V_\lambda^\dagger$.

\bigskip

For the level $l=1$ we will often identify $P_1(r)$ with the set of integers $\{ 0 , 1, \ldots , r-1 \}$
mapping the trivial weight to $0$ and the $i$-th fundamental weight $\varpi_i$ to $i$. Under this
identification the above involution \eqref{invodual} becomes $0^\dagger = 0$ and 
$i^\dagger = r-i$ for $i > 0$.

\bigskip

Given an integer $n \geq 1$, a collection $\vec{\lambda} =  (\lambda_1, \ldots ,
\lambda_n) \in P_l(r)^n$ of dominant weights of $\slfr(r)$ and a family
\begin{equation} \label{definitionfamilyF}
\cF = ( \pi: \cC \ra \cB ; s_1, \ldots , s_n ; \xi_1 , \ldots , \xi_n) 
\end{equation}
of $n$-pointed stable curves of arithmetic genus $g$ parameterized by a base variety $\cB$ with sections 
$s_i : \cB \ra \cC$ and
formal coordinates $\xi_i$ at the divisor $s_i(\cB) \subset \cC$, one constructs (see \cite{TUY} section 4.1)
a locally free sheaf
$$\cV^\dagger_{\vec{\lambda},l}(\cF,r)$$
over the base variety $\cB$, called
the {\em sheaf of conformal blocks} or the {\em sheaf of vacua} for the Lie algebra $\slfr(r)$ and the markings
$\vec{\lambda}$ at level $l$. We recall that
$\cV^\dagger_{\vec{\lambda},l}(\cF,r)$ is a subsheaf of $\cO_{\cB} \otimes
\cH^\dagger_{\vec{\lambda},l}$, where $\cH^\dagger_{\vec{\lambda},l}$ denotes the dual
of the tensor product 
$$\cH_{\vec{\lambda},l} = \cH_{\lambda_1,l} \otimes \cdots \otimes
\cH_{\lambda_n,l}$$ 
of the integrable highest weight representations $\cH_{\lambda_i,l}$ of
level $l$ and weight $\lambda_i$ of the affine Lie algebra $\widehat{\slfr(r)}$.
The formation of the sheaf of
conformal blocks commutes with base change. In particular, we have for any point $b \in \cB$
$$  \cV^\dagger_{\vec{\lambda}, l}(\cF,r) \otimes_{\cO_{\cB}} \cO_b \cong  \cV^\dagger_{\vec{\lambda},l}(C,r), $$
where $C$ denotes the data $(\cC_b = \pi^{-1}(b); s_1(b), \ldots , s_n(b); \xi_{1|\cC_b}, \ldots , \xi_{n|\cC_b})$
consisting of a stable curve $\cC_b$ with $n$ marked points $s_1(b), \ldots, s_n(b)$ and formal coordinates
$\xi_{i|\cC_b}$ at the points $s_i(b)$.

We recall that the sheaf of conformal blocks $ \cV^\dagger_{\vec{\lambda},l}(\cF, r)$  does not depend (up to a canonical isomorphism) on the formal coordinates $\xi_i$ (see e.g. \cite{U} Theorem 4.1.7). We therefore omit the formal 
coordinates in the notation.

\bigskip

We have the following factorization theorem.

\begin{prop}[\cite{TUY}] \label{factorization}
Let $C$ be a nodal curve with a node $n$ and let $\pi: \widetilde{C} \ra C$ be the partial desingularization
at $n$. Then we have the direct sum decomposition
$$  \cV^\dagger_{\vec{\lambda},l}(C, r) = \bigoplus_{\mu \in P_l(r)} 
\cV^\dagger_{\vec{\lambda} \cup \mu \cup \mu^\dagger,l}(\widetilde{C}, r), $$
where we put the weights $\mu$ and $\mu^\dagger$ at the two points $a,b \in \widetilde{C}$ lying over the
node $n \in C$.
\end{prop}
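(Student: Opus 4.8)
The plan is to deduce the decomposition from the realisation of conformal blocks as spaces of coinvariants, combined with the sewing construction at the node. Recall that $\cV^\dagger_{\vec{\lambda},l}(C,r)$ is the space of linear forms on $\cH_{\vec{\lambda},l}$ annihilated by the current algebra $\slfr(r)\otimes\Gamma(C\setminus\{p_i\},\cO_C)$, the action being defined through the Laurent expansions of functions at the marked points $p_i$. The two points $a,b$ of $\widetilde{C}$ over the node $n$ carry local coordinates $t$ and $s$, and the node is recovered from the gluing $ts=0$; I would work over the formal disc $\Spec\CC[[q]]$ with a family whose central fibre is $C$ and whose nearby fibres are obtained by sewing the two discs at $a$ and $b$ via $ts=q$, so that all the relevant functions near the node become functions on the annulus $ts=q$.

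The algebraic heart is the identification $\cH_{\mu^\dagger,l}\cong(\cH_{\mu,l})^{*}$ of the module attached to $b$ with the restricted dual (contragredient) of the module attached to $a$, compatibly with the involution \eqref{invodual}. Under this identification one has, for each $\mu\in P_l(r)$, the canonical copairing $\sum_i e_i\otimes e^i$, summed over a homogeneous basis $(e_i)$ of $\cH_{\mu,l}$ with dual basis $(e^i)$, which lives in the graded completion of $\cH_{\mu,l}\otimes\cH_{\mu^\dagger,l}$. The contragredient action is arranged precisely so that the combined current coming from a function on the annulus $ts=q$ annihilates this copairing. I would then define, for a block $\Phi$ on $\widetilde{C}$ with labels $\vec{\lambda},\mu,\mu^\dagger$ at $p_i,a,b$, the contraction $\Psi(u)=\sum_i\Phi(u\otimes e_i\otimes e^i)$; summing the resulting forms over the finite set $P_l(r)$ produces a linear map from the right-hand side to $\cV^\dagger_{\vec{\lambda},l}(C,r)$. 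Its $\slfr(r)\otimes\Gamma(C\setminus\{p_i\},\cO_C)$-invariance I would verify by moving the current across the copairing, using the invariance of $\Phi$ under currents with prescribed poles at $a$ and $b$ together with the vanishing just described.

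Two points require real work and together form the main obstacle. First, well-definedness: the sum defining $\Psi(u)$ is a priori infinite, and one must show it is finite, equivalently $q$-convergent, on each fixed $u$. This is where the energy grading enters: the relations imposed by $\Phi$, together with the nilpotency of the positive modes on a fixed vector, bound the conformal weights that contribute, so that only finitely many $e_i$ survive and the sewn correlation extends to a section over $\Spec\CC[[q]]$ regular at $q=0$. Second, bijectivity: for the inverse I would use propagation of vacua to insert the pair $a,b$ on $C$ without changing the block, then decompose the $\slfr(r)$-action near the node into isotypic components indexed by $\mu\in P_l(r)$ and project onto the $\mu$-component. The genuinely technical step is checking that sewing and factorisation are mutually inverse, which is exactly the content of the degeneration argument of \cite{TUY}; once injectivity is established it can alternatively be closed by comparing dimensions through the Verlinde formula.
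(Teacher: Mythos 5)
The paper offers no proof of this proposition: it is quoted verbatim from \cite{TUY} (see also \cite{U}, Theorem 4.4.9), so your sketch has to be judged on its own terms as a reconstruction of that argument. Judged that way, there is a genuine flaw at the centre of your construction. For the nodal curve $C$ itself the contraction element is \emph{not} the full copairing $\sum_i e_i\otimes e^i$ in the graded completion of $\cH_{\mu,l}\otimes\cH_{\mu^\dagger,l}$; it is the degree-zero copairing $\widetilde{0}_\mu=\sum_j v_j\otimes v^j\in V_\mu\otimes V_{\mu^\dagger}$, a finite sum over a basis of the finite-dimensional top piece. The full copairing (weighted by $q^{\Delta_i}$) belongs to the sewing construction over $\Spec\CC[[q]]$ and degenerates precisely to $\widetilde{0}_\mu$ at $q=0$. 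With your choice two things go wrong. First, the sum $\sum_i\Phi(u\otimes e_i\otimes e^i)$ is genuinely infinite: its weighted version is the $q$-expansion of the sewn block, whose coefficients do not vanish for large $\Delta_i$, and your proposed rescue via ``nilpotency of the positive modes on a fixed vector'' does not bound the contributing conformal weights. Second, even formally the resulting functional is not a conformal block on $C$: a function on $C\setminus\{p_i\}$ pulls back to a function on $\widetilde{C}$ subject only to $f(a)=f(b)$, so its positive modes at $a$ and $b$ act independently; they annihilate $V_\mu\otimes V_{\mu^\dagger}$ (which is why $\widetilde{0}_\mu$ works, the zero modes then cancelling by $f(a)=f(b)$ and $\gfr$-invariance of the finite copairing) but they do not annihilate the full copairing. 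There is no annulus $ts=q$ on the nodal curve, so the cancellation mechanism you invoke is not available.

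The second, structural, issue is that the actual content of the theorem --- that this map is a bijection onto $\cV^\dagger_{\vec{\lambda},l}(C,r)$, summed over $\mu\in P_l(r)$ --- is exactly the part you defer to \cite{TUY}. Propagation of vacua is not the right tool for the inverse: it inserts points labelled by the \emph{trivial} weight, whereas here one must decompose a block on $C$ according to the $\gfr$-isotypic components at the node and use that $\cH_{\mu,l}$ is generated by $V_\mu$ under the current algebra. Closing the argument ``by comparing dimensions through the Verlinde formula'' is circular, since that formula is itself derived from factorization. So the sketch correctly identifies the framework (coinvariants, contragredient module at $b$, contraction at the node) but misidentifies the key element and leaves the substantive step to the reference.
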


\bigskip

\begin{lem}  \label{dimcblevelone}
Let $C$ be a stable curve of genus $g$ with $n$ marked points labelled with the dominant weights
$\vec{\lambda} = (\lambda_1, \ldots, \lambda_n) \in P_1(rl)^n$. Then
$$ \dim \cV^\dagger_{\vec{\lambda},1}(C,rl) = \left\lbrace \begin{array}{cl} (rl)^g &  \text{if} \ 
\sum_{i=1}^n \lambda_i \equiv 0 \  \mathrm{mod} \ rl  \\ 0 & \text{otherwise} \end{array} \right. $$ 
\end{lem}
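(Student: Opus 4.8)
The plan is to induct on the genus $g$, using the factorization theorem (Proposition \ref{factorization}) to peel off one handle at a time and reduce to the genus-$0$ case. The essential preliminary input is that $\dim \cV^\dagger_{\vec{\lambda},1}(C,rl)$ depends only on $g$ and $\vec{\lambda}$, and not on the particular stable curve $C$: since the sheaf of conformal blocks is locally free over the connected moduli space of stable $n$-pointed curves of genus $g$, its rank is constant, so I may replace $C$ by any convenient degeneration of the same genus. In particular, for $g \geq 1$ I may assume that $C$ is a one-nodal degeneration whose partial desingularization $\widetilde{C}$ has genus $g-1$.

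For the base case $g=0$ I would first reduce, by propagation of vacua, the $n$-point block on $\PP^1$ to the computation of fusion coefficients, and then invoke the known description of the level-$1$ fusion rules of $\widehat{\slfr(rl)}$. Under the identification $P_1(rl) = \{0,1,\dots,rl-1\}$, the level-$1$ integrable modules $\cH_{i,1}(rl)$ are permuted by the center and their fusion product is governed by the abelian group $\ZZ/rl\ZZ$, being multiplicity-free; consequently $\dim \cV^\dagger_{\vec{\lambda},1}(\PP^1, rl)$ equals $1$ when $\sum_i \lambda_i \equiv 0 \bmod rl$ and $0$ otherwise. This is the step I expect to be the main obstacle, in the sense that it is the only place where the specific structure of the level-$1$ theory enters; everything surrounding it is formal bookkeeping.

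For the inductive step, applying Proposition \ref{factorization} at the node of $C$ gives
\[
  \dim \cV^\dagger_{\vec{\lambda},1}(C, rl) = \sum_{\mu \in P_1(rl)} \dim \cV^\dagger_{\vec{\lambda}\cup\mu\cup\mu^\dagger,1}(\widetilde{C}, rl),
\]
where $\widetilde{C}$ has genus $g-1$. By the inductive hypothesis each summand equals $(rl)^{g-1}$ when $\sum_i \lambda_i + \mu + \mu^\dagger \equiv 0 \bmod rl$ and vanishes otherwise. Under the identification $P_1(rl) = \{0,\dots,rl-1\}$ one has $\mu^\dagger = rl-\mu$ for $\mu \neq 0$ and $0^\dagger = 0$, so $\mu + \mu^\dagger \equiv 0 \bmod rl$ for every $\mu$; hence the congruence condition on each summand reduces to $\sum_i \lambda_i \equiv 0 \bmod rl$, independently of $\mu$. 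When this congruence holds, all $|P_1(rl)| = rl$ summands contribute $(rl)^{g-1}$, yielding $(rl)^g$; when it fails, every summand vanishes. This completes the induction and proves the Lemma, the only delicate points being the deformation invariance of the dimension and the multiplicity-one property of level-$1$ fusion, which together render the final count transparent.
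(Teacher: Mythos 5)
Your proof is correct and follows essentially the same route as the paper: degenerate to a nodal curve, iterate the factorization of Proposition \ref{factorization} using $\mu + \mu^\dagger \equiv 0 \bmod rl$ to see that each handle contributes a factor of $rl$, and reduce the genus-$0$ case to the known multiplicity-free level-$1$ fusion rules of $\widehat{\slfr(rl)}$ (the paper cites Gepner and Nakanishi--Tsuchiya for this last computation). The only cosmetic difference is that you peel off one node at a time by induction while the paper degenerates all at once to a rational curve with $g$ nodes.
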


\begin{proof}
This is a straightforward consequence of the factorization of conformal blocks when degenerating the 
genus-$g$ curve to a rational curve with $g$ nodes and taking the desingularization --- we iterate Proposition
\ref{factorization} $g$ times. The fomula states that
$$ \dim \cV^\dagger_{\vec{\lambda},1}(C,rl) = \sum_{\vec{\mu} \in P_1(rl)^g}
\dim \cV^\dagger_{\vec{\lambda} \cup \vec{\mu} \cup \vec{\mu}^\dagger, 1} (\PP^1,rl)$$
On the other hand $\sum_{i=1}^n \lambda_i + \sum_{j=1}^{g} \mu_j + \mu_j^\dagger \equiv 
\sum_{i=1}^n \lambda_i \  \mathrm{mod} \ rl$, since $\mu_j + \mu_j^\dagger \equiv 0 \ 
\mathrm{mod} \  rl$. Thus, it is 
sufficient to prove the dimension formula for $g=0$. Then use once more the factorization
formula to reduce to the case of $\PP^1$ with three marked points, i.e. $g=0$ and $n=3$. 
That calculation is standard, see e.g. \cite{G} Formula 2.14 or
\cite{NT} Section 4.
\end{proof}

\section{Branching rules}

In this section we review the results on the branching rules of \cite{H} used in the proof of the Main Theorem.

\subsection{Young diagrams} \label{Young}

Given two positive integers $r$ and $l$, we will denote by Young diagram of type $(r,l)$ a decreasing sequence 
of $r$ positive integers 
$$ Y = ( y_1 \geq y_2 \geq \ldots \geq y_{r-1} \geq y_r \geq 0 ) \ \ \text{such that} \ \ y_1 - y_r \leq l$$
We denote the (infinite) set of Young diagrams of type $(r,l)$ by $\YY_{r,l}$ and consider the map to 
the set of dominant weights of $\slfr(r)$
$$ \pi : \YY_{r,l} \lra P_l(r), \qquad  Y \mapsto \pi(Y) = \sum_{i=1}^{r-1}(y_i - y_{i+1}) \varpi_i. $$

We introduce the finite subsets:

$$  \YY_{r,l}^{aff} =  \{ Y  \in  \YY_{r,l} \ | \ y_r \leq l-1 \} \qquad \text{and} \qquad
 \YY_{r,l}^{fin} = \{ Y \in  \YY_{r,l}^{aff} \ | \ y_1 \leq l \}. $$

Note that all fibers of the map $\pi : \YY_{r,l}^{aff} \lra P_l(r)$ have cardinality $l$.

\bigskip

We will now define several maps between these finite sets of Young diagrams. We can think of a Young diagram
of type $(r,l)$ as a collection of $r$ rows, where we put into the $i$-th row $y_i$ boxes. We distinguish
two cases:

\bigskip

\noindent
{\bf Case 1:} $Y \in  \YY_{r,l}^{fin}$. We have $y_1 \leq l$ and $y_r \leq l-1$. We put 
${}^t Y \in \YY_{l,r}^{fin}$ the Young diagram of size $(l,r)$ defined by taking the transpose of $Y$, i.e.
by putting the $y_i$ boxes into the $i$-th column, and $Y^\dagger \in \YY_{r,l}^{fin}$ the complement (after a 
$180$ degree rotation) of $Y$ in the full rectangle consisting of $r$ rows having $l$ boxes each.

\bigskip

\noindent
{\bf Case 2:} $Y \in  \YY_{r,l}^{aff} \setminus \YY_{r,l}^{fin}$. We have $l+1 \leq y_1 \leq 2l-1$ and
$y_r \leq l-1$. We can write the Young diagram $Y$ as a union of two diagrams $Y_1 \cup Y_2$ with
$Y_i \in \YY_{r,l}^{fin}$, where the first one $Y_1$ has in its $i$-th row $\mathrm{min}(l,y_i)$ boxes and 
the second one $Y_2$ has in its $i$-th row $\mathrm{max}(y_i - l ,0)$ boxes. We then define the
transpose ${}^t Y$ to be the union of the transposes ${}^t Y_1 \cup {}^t Y_2$, where the 
${}^t Y_i$ are defined as in Case 1. Note that the Young diagram fits into two full rectangles
consisting of $r$ rows having $2l$ boxes each. We then define $Y^\dagger \in \YY_{r,l}^{aff}$ to be the
complement (after a $180$ degree rotation) of $Y$ in that double rectangle.

\bigskip

We thus have constructed two involutive bijective maps

\bigskip

\begin{center}
$\begin{array}{rcl}
\YY_{r,l}^{aff} & \lra & \YY_{r,l}^{aff} \\
Y & \mapsto & Y^\dagger
\end{array}$ \ \ and \ \ $\begin{array}{rcl}
\YY_{r,l}^{aff} & \lra & \YY_{l,r}^{aff} \\
Y & \mapsto & {}^t Y
\end{array} $ 
\end{center}

\bigskip

These two maps preserve the subsets $\YY_{r,l}^{fin} \subset \YY_{r,l}^{aff}$ and 
$\YY_{l,r}^{fin} \subset \YY_{l,r}^{aff}$. Note that ${}^t(Y^\dagger) = 
({}^t Y)^\dagger$ and that $\pi(Y^\dagger) = \pi(Y)^\dagger$, where $\pi(Y)^\dagger$ is defined as in \eqref{invodual}. We also note the equalities
$$ |\YY_{r,l}^{aff}| = |\YY_{l,r}^{aff}| = l |P_l(r)| = r |P_r(l)|.$$

\bigskip

Given a Young diagram $Y \in \YY_{r,l}$ we define its size $|Y| = \sum_{i=1}^r y_i \ \mathrm{mod} \ rl$. Moreover
we identify $\ZZ/ rl \ZZ = \{ 0 , \ldots, rl -1 \}$, so that $|Y| \in   \{ 0 , \ldots, rl -1 \}$. It is 
clear from the definition that $|Y^\dagger| + |Y| = rl$ and that $|{}^tY| = |Y|$. We will denote by 
$\YY_{r,l}^{fin}(\lambda)$ and $\YY_{r,l}^{aff}(\lambda)$ the corresponding subsets of Young diagrams of size
$\lambda$.

\bigskip

\begin{ex}
{\em We consider the following example for $r=3$ and $l=4$: $Y = (6,4,3) \in  \YY_{3,4}^{aff} \setminus \YY_{3,4}^{fin}$. Then we have the following table.
$$\begin{array}{cccc}
Y = (6,4,3) & {}^t Y = (4,4,3,2) & Y^\dagger = (5,4,2) & {}^t Y^\dagger = (4,3,2,2) \\
 & & & \\
\yng(6,4,3) & \yng(4,4,3,2) & \yng(5,4,2) & \yng(4,3,2,2) \\
 & & & \\
 \pi(Y) = 2\varpi_1 + \varpi_2 &  \pi(Y^\dagger) = \varpi_1 + 2\varpi_2 & 
\pi({}^t Y) = \varpi_2 + \varpi_3 & \pi({}^t Y^\dagger) = \varpi_1 + \varpi_2
\end{array}
$$
Moreover $|Y|=|{}^t Y| = 1$ and $|Y^\dagger| = |{}^t Y^\dagger| = 11$.}
\end{ex}

\bigskip

\subsection{Finite-dimensional case}

We now recall the classical Skew Cauchy Formula (see e.g. \cite{Pr} Theorem 8.4.1. Chapter 9), which
gives the branching rule of the fundamental $\slfr(rl)$-modules under the embedding 
$\slfr(r) \times \slfr(l) \subset \slfr(rl)$.
Let $\lambda$ be in  $\{ 0 , \ldots, rl -1 \}$. Under the identification $\{ 0 , \ldots, rl -1 \} = P_1(rl)$
the $\slfr(rl)$-module $\Lambda^\lambda \CC^{rl} = \Lambda^\lambda \left( \CC^{r} \otimes \CC^{l} \right)$ corresponds to the fundamental weight $\lambda \in P_1(rl)$ and decomposes as sum of irreducible 
$\slfr(r) \times \slfr(l)$-modules 
\begin{equation} \label{SkewCauchy}
V_\lambda =
\Lambda^\lambda \left( \CC^{r} \otimes \CC^{l} \right) = \bigoplus_{ Y \in \YY_{r,l}^{fin}(\lambda)} V_{\mu}
\otimes V_{{}^t \mu},
\end{equation}
where $V_{\mu}$ and $V_{{}^t \mu}$ denote the $\slfr(r)$ and $\slfr(l)$-modules with dominant
weights $\mu = \pi(Y) \in P_l(r)$ and ${}^t \mu= \pi({}^t Y) \in P_r(l)$. 

\bigskip

\subsection{Infinite-dimensional case}

The analogue of the above Skew Cauchy Formula for the embedding of affine Lie algebras
$\widehat{\slfr(r)} \times \widehat{\slfr(l)} \subset \widehat{\slfr(rl)}$ is worked out in
\cite{H} Theorem 4.2. With the above notation we have the decomposition as sum of 
irreducible 
$\widehat{\slfr(r)} \times \widehat{\slfr(l)}$-modules
$$ \cH_{\lambda,1} = \bigoplus_{ Y \in \YY_{r,l}^{aff}(\lambda)} \cH_{\mu,l} \otimes 
\cH_{{}^t \mu,r}. $$

The Virasoro operator $L_0$ associated to $\slfr(rl)$ induces a decomposition 
(see \cite{TUY} or \cite{U}) into eigenspaces of the  $\widehat{\slfr(rl)}$-module
$$ \cH_{\lambda,1} = \bigoplus_{d=0}^\infty \cH_{\lambda,1}(d) \qquad \text{with} \qquad
\cH_{\lambda,1}(0) = V_\lambda.$$
For every Young diagram $Y \in \YY_{r,l}^{aff}(\lambda)$ we have an inclusion 
\begin{equation} \label{inclH}
 \cH_{\mu,l} \otimes \cH_{{}^t \mu,r} \hookrightarrow  \cH_{\lambda,1}.
\end{equation}
Note that the Virasoro operators $L_0$ associated to the two Lie algebras 
$\slfr(r) \times \slfr(l) \subset \slfr(rl)$ coincide since these Lie algebras form a 
conformal pair (see \cite{KW} Proposition 3.2 (c)). Hence restricting the previous
inclusion to the $0$-eigenspace we obtain an inclusion 
$$ \cH_{\mu,l} \otimes \cH_{{}^t \mu,r} (0) =  
V_{\mu} \otimes V_{{}^t \mu} \hookrightarrow  \cH_{\lambda,1}(n_Y) $$
for some positive integer $n_Y$. It follows from the Skew Cauchy Formula \eqref{SkewCauchy} that
$$Y \in \YY_{r,l}^{fin}(\lambda) \qquad \iff \qquad n_Y = 0.$$

\bigskip

\section{The projective WZW-connection}

\subsection{Definition of the projective WZW-connection}

We now outline the definition of the projective WZW-connection on the sheaf
$\cV^\dagger_{\vec{\lambda},l}(\cF,r)$ over the smooth locus $\cB^s \subset \cB$
parameterizing smooth curves and refer to \cite{TUY} or \cite{U} for a detailed
account. Let $\cD \subset \cB$ be the discriminant locus and let $\cS = \coprod_{i=1}^n
s_i(\cB)$ be the union of the images of the $n$ sections. We recall the exact
sequence
\begin{equation} \label{deftheta}
0  \lra \pi_* \Theta_{\cC/ \cB}(* \cS) \lra \pi_* \Theta'_\cC(* \cS)_\pi
\stackrel{\theta}{\lra} \Theta_\cB(- \mathrm{log} \ \cD) \lra 0,
\end{equation}
where $\Theta_{\cC/\cB}(* \cS)$ denotes the sheaf of vertical rational vector fields on
$\cC$ with poles only along the divisor $\cS$, and $\Theta'_\cC(*\cS)_\pi$ the sheaf
of rational vector fields on $\cC$ with poles only along the divisor $\cS$ and with constant
horizontal components along the fibers of $\pi$. There is an $\cO_\cB$-linear map
$$ p : \pi_* \Theta'_\cC (* \cS)_\pi \lra \bigoplus_{i=1}^n \cO_\cB((\xi_i))
\frac{d}{d\xi_i},$$
which associates to a vector field $\vec{\ell}$ in $\Theta'_\cC (* \cS)_\pi$ the $n$
Laurent expansions $\ell_i \frac{d}{d\xi_i}$ around the divisor $s_i(\cB)$. Abusing notation
we also write $\vec{\ell}$ for its image under $p$
$$ \vec{\ell} = (\ell_1 \frac{d}{d\xi_1}, \cdots , \ell_n \frac{d}{d\xi_n} ) \in
\bigoplus_{i=1}^n \cO_\cB((\xi_i))  \frac{d}{d\xi_i}.$$
We then define for any vector field $\vec{\ell}$ in $\Theta'_\cC (* \cS)_\pi$ the
endomorphism $D(\vec{\ell})$ of $\cO_{\cB} \otimes \cH^\dagger_{\vec{\lambda},l}$ by
$$ D(\vec{\ell}) (f\otimes u) = \theta(\vec{\ell}) . f \otimes u +
\sum_{i=1}^n f \otimes (T[\ell_i] . u) $$
for $f$ a local section of $\cO_{\cB}$ and
$u \in \cH^\dagger_{\vec{\lambda},l}$. Here $T[\ell_i]$ denotes the action of the
energy-momentum tensor on the $i$-th component $\cH^\dagger_{\lambda_i,l}$, i.e., expanding
$\ell_i = \sum_{j= -n_0}^\infty \alpha_j \xi_i^{j+1}$ with $\alpha_j$ local sections of
$\cO_{\cB}$, the operator $T[\ell_i]$ equals $\sum_{j= -n_0}^\infty \alpha_j L_j$. Here the 
operators $L_j$ are the Virasoro operator acting 
linearly on $\cH^\dagger_{\lambda_i,l}$ via the Sugawara representation. It is shown
in \cite{TUY} that $D(\vec{\ell})$ preserves $\cV^\dagger_{\vec{\lambda},l}(\cF,r)$ and
that $D(\vec{\ell})$ only depends on the image $\theta(\vec{\ell})$ up to
homothety. One therefore obtains a projective connection $\nabla$ on the sheaf
$\cV^\dagger_{\vec{\lambda},l}(\cF,r)$ over $\cB^s$ given by
\begin{equation} \label{defconnection}
 \nabla_{\theta(\vec{\ell})} = \theta(\vec{\ell}) + T[\vec{\ell}].
\end{equation}

\bigskip

\subsection{Conformal embedding and projective flatness}

\begin{prop}
Let $\cF$ be a family of {\em smooth} projective $n$-pointed curves as in \eqref{definitionfamilyF}
and let $\vec{\lambda} \in P_1(rl)^n$ be a labelling of the marked points with fundamental 
weights of $\slfr(rl)$. Then for every collection of Young diagrams $\vec{Y} = (Y_1, \ldots, Y_n) \in 
\prod_{i=1}^n \YY_{r,l}^{aff}(\lambda_i)$ the tensor product of the $n$ inclusions
\eqref{inclH}
$$ \cH_{\vec{\mu},l} \otimes \cH_{{}^t \vec{\mu},r} \hookrightarrow  \cH_{\vec{\lambda},1}, $$
induces a natural homomorphism between sheaves of conformal blocks 
\begin{equation} \label{sdcb}
 (\cV^\dagger_{\vec{\lambda},1} (\cF, rl) , \nabla ) \lra
 (\cV^\dagger_{\vec{\mu},l} ( \cF , r) , \nabla )  \otimes 
 ( \cV^\dagger_{{}^t \vec{\mu},r} ( \cF, l), \nabla ),   
 \end{equation}
which is projectively flat for the WZW connections.
\end{prop}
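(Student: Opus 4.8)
The plan is to obtain the homomorphism by dualizing the inclusion \eqref{inclH} and to deduce its projective flatness from the conformal pair property of $\slfr(r) \times \slfr(l) \subset \slfr(rl)$. Write $\iota \colon \cH_{\vec{\mu},l} \otimes \cH_{{}^t\vec{\mu},r} \hookrightarrow \cH_{\vec{\lambda},1}$ for the tensor product of the $n$ inclusions \eqref{inclH}; it is a morphism of $\widehat{\slfr(r)} \times \widehat{\slfr(l)}$-modules. Its transpose $\iota^\vee$, tensored with $\cO_\cB$, is an $\cO_\cB$-linear map $\cO_\cB \otimes \cH^\dagger_{\vec{\lambda},1} \to \cO_\cB \otimes (\cH^\dagger_{\vec{\mu},l} \otimes \cH^\dagger_{{}^t\vec{\mu},r})$, and the first task is to check that it sends the subsheaf $\cV^\dagger_{\vec{\lambda},1}(\cF,rl)$ into $\cV^\dagger_{\vec{\mu},l}(\cF,r) \otimes \cV^\dagger_{{}^t\vec{\mu},r}(\cF,l)$. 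Since the formation of conformal blocks commutes with base change, I would verify this fibrewise over a point $b \in \cB$. The embedding $\slfr(r) \times \slfr(l) \subset \slfr(rl)$ realises the current algebra $(\slfr(r) \oplus \slfr(l)) \otimes \cO_C(\ast S)$ (with $S$ the set of marked points of $C$) as a subalgebra of $\slfr(rl) \otimes \cO_C(\ast S)$, and $\iota$ is equivariant for this inclusion; hence a functional annihilated by the $\slfr(rl)$-current algebra restricts along $\iota$ to one annihilated by the $(\slfr(r) \oplus \slfr(l))$-current algebra. Because the two summands act on separate tensor factors, the space of coinvariants of $\cH_{\vec{\mu},l} \otimes \cH_{{}^t\vec{\mu},r}$ is the tensor product of the coinvariants of each factor; dualizing identifies the annihilated functionals with $\cV^\dagger_{\vec{\mu},l}(\cF,r) \otimes \cV^\dagger_{{}^t\vec{\mu},r}(\cF,l)$. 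This produces the homomorphism \eqref{sdcb}, which on fibres is the map $\alpha$ of the Main Theorem.

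For the projective flatness I would compute all three connections \eqref{defconnection} using a single rational vector field $\vec{\ell} \in \Theta'_\cC(\ast \cS)_\pi$: its Laurent tails $\ell_i$ at the sections $s_i(\cB)$, and therefore the operators $T[\ell_i] = \sum_j \alpha_j L_j$, are literally the same for the source and for each target factor, while the horizontal part $\theta(\vec{\ell})$ is a common tangent vector on $\cB^s$. It thus suffices to show that $\iota^\vee$ intertwines the operator $D(\vec{\ell})$ on $\cO_\cB \otimes \cH^\dagger_{\vec{\lambda},1}$ with the operator $D(\vec{\ell}) \otimes 1 + 1 \otimes D(\vec{\ell})$ on the target. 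The horizontal terms $\theta(\vec{\ell}).f$ match tautologically, so the content is the comparison of the energy-momentum actions. Here I would invoke the conformal pair property (\cite{KW} Proposition 3.2(c)): the Sugawara Virasoro operators satisfy $L_j^{\slfr(rl)} = L_j^{\slfr(r)} + L_j^{\slfr(l)}$ on $\cH_{\lambda_i,1}$, which on the summand $\cH_{\mu_i,l} \otimes \cH_{{}^t\mu_i,r}$ reads $L_j^{\slfr(rl)} = L_j^{\slfr(r)} \otimes 1 + 1 \otimes L_j^{\slfr(l)}$. Summing against the coefficients $\alpha_j$ over the marked points gives the required intertwining of the $T[\ell_i]$-actions, hence of the two operators $D(\vec{\ell})$, and passing to the quotient by the homothety ambiguity yields projective flatness.

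The substantive input is precisely the conformal pair identity $L_j^{\slfr(rl)} = L_j^{\slfr(r)} + L_j^{\slfr(l)}$; everything else is formal. Equivalently it is the matching of central charges $c(\widehat{\slfr(rl)}_1) = c(\widehat{\slfr(r)}_l) + c(\widehat{\slfr(l)}_r)$, which is what makes $\slfr(r) \times \slfr(l) \subset \slfr(rl)$ a \emph{conformal} embedding (vanishing of the coset Virasoro). I expect the only point needing care to be that the connection \eqref{defconnection} is defined only up to homothety: the intertwining must be carried out for one fixed common lift $\vec{\ell}$ and the conclusion read off modulo scalars, and the central-charge identity is exactly what guarantees that the residual scalar ambiguities on source and target are compatible.
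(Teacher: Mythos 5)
Your proposal is correct and follows essentially the same route as the paper: the construction of the map by dualizing the equivariant inclusion \eqref{inclH}, and projective flatness deduced from the coincidence of the Sugawara--Virasoro operators for the conformal pair $\slfr(r)\times\slfr(l)\subset\slfr(rl)$ via \cite{KW} Proposition 3.2(c), which is exactly the paper's argument. The extra detail you supply (the fibrewise check that annihilated functionals restrict to annihilated functionals, and the explicit intertwining of the operators $D(\vec{\ell})$) is a correct elaboration of what the paper leaves implicit.
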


\begin{proof}
We consider the embedding of semi-simple Lie algebras $\mathfrak{p} = \slfr(r) \times \slfr(l)
\subset \mathfrak{g} = \slfr(rl)$. Since this embedding is conformal, we have by \cite{KW}
Proposition 3.2(c) that for any integer $n$ the two Virasoro operators $L_n$ associated to
$\mathfrak{p}$ and $\mathfrak{g}$ coincide. The proposition now follows since the two linear
parts of the connections \eqref{defconnection} given by the energy-momentum tensor also 
coincide.
\end{proof}

\bigskip

\begin{rem}
{\em Note that the WZW-connection is only defined for a family of {\em smooth} curves.}
\end{rem}

\begin{rem}
{\em The previous proposition actually holds for any pair $\mathfrak{p} \subset \mathfrak{g}$ of 
conformal embeddings of semi-simple Lie algebras. For the list of conformal embeddings, 
see e.g. \cite {BB}.} 
\end{rem}

We have the following 

\begin{cor} \label{rankconstant}
With the above notation the homomorphism obtained from \eqref{sdcb}
$$ SD_{\vec{Y}} : (\cV_{\vec{\mu},l} (\cF,r), \nabla) \lra
(\cV_{\vec{\lambda},1} (\cF, rl) , \nabla )  \otimes 
( \cV^\dagger_{{}^t \vec{\mu},r} (\cF, l), \nabla ) $$
is projectively flat and has constant rank.
\end{cor}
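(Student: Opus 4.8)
The plan is to deduce the statement formally from the projective flatness of the morphism \eqref{sdcb} established in the previous Proposition, by passing to dual sheaves and transposing. First I would record how $SD_{\vec{Y}}$ arises. Write $A=\cV^\dagger_{\vec{\lambda},1}(\cF,rl)$, $B=\cV^\dagger_{\vec{\mu},l}(\cF,r)$ and $C=\cV^\dagger_{{}^t\vec{\mu},r}(\cF,l)$, so that \eqref{sdcb} is a morphism $\alpha\colon A\lra B\otimes C$. The sheaves of covacua $\cV_{\vec{\mu},l}(\cF,r)$ and $\cV_{\vec{\lambda},1}(\cF,rl)$ are the $\cO_{\cB}$-duals $B^\vee$ and $A^\vee$, carrying the dual WZW connection. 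Then $SD_{\vec{Y}}\colon B^\vee\lra A^\vee\otimes C$ is the partial transpose of $\alpha$, namely $SD_{\vec{Y}}(\phi)=(\phi\otimes\Id_C)\circ\alpha$ for a local section $\phi$ of $B^\vee$; equivalently it is the image of $\alpha$ under the canonical isomorphism $\Hom(A,B\otimes C)\cong\Hom(B^\vee,A^\vee\otimes C)$, both sides being $A^\vee\otimes B\otimes C$.

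Next I would show $SD_{\vec{Y}}$ is projectively flat. Denote by $\omega_A,\omega_B,\omega_C$ the scalar $1$-forms (the projective factors) of the three WZW connections, so that their curvatures are $\omega_A\,\Id$, $\omega_B\,\Id$, $\omega_C\,\Id$. Because $\slfr(r)\times\slfr(l)\subset\slfr(rl)$ is a conformal pair, the energy--momentum tensors appearing in \eqref{defconnection} coincide, which yields both the projective flatness of $\alpha$ (previous Proposition) and, at the level of central charges, the additivity $\omega_A=\omega_B+\omega_C$. The connection induced on $\Hom(A,B\otimes C)=A^\vee\otimes B\otimes C$ then has curvature $(-\omega_A+\omega_B+\omega_C)\,\Id=0$, so this $\Hom$-sheaf carries a genuinely flat connection for which $\alpha$ is a horizontal section. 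Since the canonical isomorphism above is flat, $SD_{\vec{Y}}$ is likewise a horizontal section of the flat $\Hom$-sheaf $\Hom(B^\vee,A^\vee\otimes C)$; in particular it is projectively flat for the WZW connections.

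Finally, constancy of the rank is the standard consequence. A horizontal section of a flat vector bundle over $\cB^s$ takes values at different points that are identified by parallel transport, and this transport consists of linear isomorphisms of the fibres of $B^\vee$ and of $A^\vee\otimes C$ intertwining $SD_{\vec{Y}}$; such isomorphisms preserve the rank. Hence $\rk SD_{\vec{Y}}$ is constant on each connected component of $\cB^s$, in particular over the connected base of a family of smooth curves of fixed genus.

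The only step requiring genuine care is the second one: one must check that forming the dual sheaves of covacua and transposing is compatible with the \emph{projective} WZW connections, that is, that the scalar ambiguities cancel. This is precisely where the conformal embedding enters, through the additivity $\omega_A=\omega_B+\omega_C$ of projective factors coming from the coincidence of energy--momentum tensors; once this is granted, the flatness of the relevant $\Hom$-sheaf, and hence the constant-rank conclusion, is automatic.
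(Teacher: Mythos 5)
Your proposal is correct and follows essentially the same route as the paper, which simply cites the general facts (Belkale, Lemmas A.1 and A.2) that dualizing/transposing a projectively flat morphism of bundles with projective connections yields a projectively flat morphism, and that such a morphism has locally constant rank via parallel transport; your argument is a spelled-out version of exactly those lemmas. The only remark is that the literal cancellation $\omega_A=\omega_B+\omega_C$ is more than is needed: projective flatness of $\alpha$ alone already forces constant rank, since parallel transport intertwines $SD_{\vec{Y}}$ up to a nonzero scalar.
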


\begin{proof}
Both assertions are valid for any vector bundles equipped with projective connections. We refer e.g. to
\cite{Be2} Lemma A.1 and Lemma A.2 for a proof.
\end{proof}

\bigskip

\section{Proof of the Main Theorem}

We prove the Main Theorem by induction on the genus $g$ of the curve. For $g=0$ the Theorem coincides
with \cite{NT} Theorem 4.4, since in that case $\dim \cV_{\vec{\lambda},1}(C, rl) = 1$ by Lemma
\ref{dimcblevelone}. Note that for $g=0$ the map $SD_{\vec{Y}}$ is an isomorphism.

\bigskip

We now assume that the Theorem holds for any smooth marked curve of genus $g-1$. First of all we notice that
the map $SD_{\vec{Y}}$ as defined in the Main Theorem for smooth curves can be defined as well for a nodal
curve. Next, we observe that it is enough to show injectivity of the map $SD_{\vec{Y}}$ for a curve $C$
of genus $g$ with one node. In fact, by upper semi-continuity of the rank of a homomorphism between
vector bundles, we then obtain injectivity of $SD_{\vec{Y}}$ for a general smooth curve. Then, since the rank
of $SD_{\vec{Y}}$ is constant for smooth families, as shown in Corollary \ref{rankconstant}, we obtain injectivity
for any smooth curve.

\bigskip

We consider the desingularization $\pi : \widetilde{C} \ra C$ of the nodal curve $C$ of genus $g$. Note that
$\widetilde{C}$ is smooth of genus $g-1$. The linear map $\alpha$ decomposes under the factorization
given by Proposition \ref{factorization} as follows

$$\begin{CD}
\cV^\dagger_{\vec{\lambda},1}(C, rl) @>\alpha>>  \cV^\dagger_{\vec{\mu},l} (C, r) \otimes 
\cV^\dagger_{{}^t \vec{\mu},r} (C, l) \\
@VV\cong V   @VV \cong V \\
\bigoplus_{\lambda_0 \in P_1(rl)} \cV^\dagger_{\vec{\lambda} \cup \lambda_0 \cup
\lambda_0^\dagger,1}(\widetilde{C}, rl) @>>> \bigoplus_{(\mu_1, \mu_2) \in P_l(r) \times P_r(l)}
\cV^\dagger_{\vec{\mu} \cup \mu_1 \cup \mu_1^\dagger ,l} (\widetilde{C}, r) \otimes 
\cV^\dagger_{{}^t \vec{\mu} \cup \mu_2 \cup \mu_2^\dagger,r} (\widetilde{C}, l)
\end{CD}
$$
For any triple $(\lambda_0, \mu_1, \mu_2) \in P_1(rl) \times P_l(r) \times P_r(l)$ we define the map
$$ \alpha_{\lambda_0, \mu_1, \mu_2} : \cV^\dagger_{\vec{\lambda} \cup \lambda_0 \cup
\lambda_0^\dagger,1}(\widetilde{C}, rl) \lra 
\cV^\dagger_{\vec{\mu} \cup \mu_1 \cup \mu_1^\dagger ,l} (\widetilde{C}, r) \otimes 
\cV^\dagger_{{}^t \vec{\mu} \cup \mu_2 \cup \mu_2^\dagger,r} (\widetilde{C}, l) $$
as the $(\lambda_0, \mu_1, \mu_2)$ component of $\alpha$ in the above decomposition.

\bigskip

\begin{defi}  We say that a triple $(\lambda_0, \mu_1, \mu_2) \in P_1(rl) \times P_l(r) \times P_r(l)$ 
is {\em admissible} if there exists a Young diagram $Y \in \YY_{r,l}^{aff}(\lambda_0)$
such that $\pi(Y) = \mu_1 \in P_l(r)$ and $\pi({}^t Y) = \mu_2 \in P_r(l)$. 
\end{defi}

It is clear from the definition that if $(\lambda_0, \mu_1, \mu_2)$ is admissible then 
the corresponding Young diagram $Y \in \YY_{r,l}^{aff}(\lambda_0)$ is uniquely determined either by 
the pair $(\lambda_0, \mu_1)$ or by the pair $(\lambda_0, \mu_2)$.

\bigskip

We now recall \cite{BP} Proposition 4.4 in our context, i.e., for the conformal embedding
$\mathfrak{p} = \slfr(r) \times \slfr(l) \subset \mathfrak{g} = \slfr(rl)$.

\begin{prop} \label{factconf}
Given a triple $(\lambda_0, \mu_1, \mu_2) \in P_1(rl) \times P_l(r) \times P_r(l)$ the linear
map $\alpha_{\lambda_0, \mu_1, \mu_2}$ is
\begin{itemize}
\item identically zero, if $(\lambda_0, \mu_1, \mu_2)$ is not admissible, or if  
 $(\lambda_0, \mu_1, \mu_2)$ is admissible and $Y \in \YY_{r,l}^{aff}(\lambda_0) \setminus
\YY_{r,l}^{fin}(\lambda_0)$
\item is induced by the natural inclusion
$$\cH_{\vec{\mu},l} \otimes \cH_{{}^t\vec{\mu},r} \otimes \cH_{\mu_1,l} \otimes \cH_{\mu_2,r} \otimes 
\cH_{\mu_1^\dagger,l} \otimes \cH_{\mu_2^\dagger,r} \hookrightarrow \cH_{\vec{\lambda},1} \otimes 
\cH_{\lambda_0,1}\otimes \cH_{\lambda_0^\dagger,1}, $$
if  $(\lambda_0, \mu_1, \mu_2)$ is admissible and $Y \in \YY_{r,l}^{fin}(\lambda_0)$.
\end{itemize}
\end{prop}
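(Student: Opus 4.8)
The plan is to recall the proof of \cite{BP} Proposition 4.4, specialised to the conformal pair $\pfr = \slfr(r)\times\slfr(l)\subset\gfr=\slfr(rl)$, and to read off the three cases from the branching rule of Section 3 together with the explicit form of the factorization isomorphism of Proposition \ref{factorization}. First I would make that isomorphism explicit on both sides of the square: dually, a conformal block on the normalisation $\widetilde C$ carrying weights $\nu$ and $\nu^\dagger$ at the two points over the node is sewn to a block on $C$ by contracting the two branch-point tensor factors against the canonical $L_0$-graded pairing $\cH_{\nu}\otimes\cH_{\nu^\dagger}\to\CC$ (summing over a homogeneous basis and its dual basis). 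Thus the component $\alpha_{\lambda_0,\mu_1,\mu_2}$ is computed as the matrix coefficient of the branching map $\alpha$ --- which is induced by the inclusions \eqref{inclH} at the original marked points --- between the $\gfr$-intermediate state labelled $\lambda_0$ on the source and the $\pfr$-intermediate states labelled $\mu_1,\mu_2$ on the target.

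The non-admissible case is then immediate: if no $Y\in\YY_{r,l}^{aff}(\lambda_0)$ satisfies $\pi(Y)=\mu_1$ and $\pi({}^tY)=\mu_2$, the irreducible $\pfr$-module $\cH_{\mu_1,l}\otimes\cH_{\mu_2,r}$ does not occur in the branching decomposition of $\cH_{\lambda_0,1}$, so the relevant matrix coefficient vanishes identically, and likewise for $\lambda_0^\dagger$ with $Y^\dagger$. For the admissible case I would use that the embedding is conformal, so by \cite{KW} the Virasoro operators satisfy $L_0^{\gfr}=L_0^{\slfr(r)}+L_0^{\slfr(l)}$ on the image of each inclusion \eqref{inclH}; hence the branching inclusion indexed by $Y$ carries the bottom $V_{\mu_1}\otimes V_{\mu_2}$ into the graded piece $\cH_{\lambda_0,1}(n_Y)$, with $n_Y=0$ exactly when $Y\in\YY_{r,l}^{fin}(\lambda_0)$.

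Since the sewing pairings on both the source ($\gfr$) and the target ($\pfr$) sides are $L_0$-graded, the factorization matching at the node is governed by the lowest-weight (degree $0$) pieces of the modules: the bottom of $\cH_{\lambda_0,1}$ is $V_{\lambda_0}$, which under $\pfr$ decomposes by the finite Skew Cauchy Formula \eqref{SkewCauchy} as $\bigoplus_{Y\in\YY_{r,l}^{fin}(\lambda_0)}V_{\mu_1}\otimes V_{\mu_2}$. Consequently the pieces with $Y\in\YY_{r,l}^{aff}(\lambda_0)\setminus\YY_{r,l}^{fin}(\lambda_0)$, which sit in strictly positive degree $n_Y>0$, contribute nothing to the component map, whereas for $Y\in\YY_{r,l}^{fin}(\lambda_0)$ the restriction of the $\gfr$-coevaluation to the $Y$- and $Y^\dagger$-isotypic pieces equals, by \eqref{SkewCauchy}, the product of the two $\pfr$-coevaluations; this exhibits $\alpha_{\lambda_0,\mu_1,\mu_2}$ as the map induced by the stated natural inclusion of tensor products of representations.

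The main obstacle is the last paragraph's bookkeeping: one must check that the canonical pairing on $\cH_{\lambda_0,1}\otimes\cH_{\lambda_0^\dagger,1}$ is compatible with branching, i.e. that the $Y$-isotypic summand of $\cH_{\lambda_0,1}$ pairs with the $Y^\dagger$-isotypic summand of $\cH_{\lambda_0^\dagger,1}$ and that on these pieces it factors as the product of the $\slfr(r)$- and $\slfr(l)$-pairings (using $\pi(Y^\dagger)=\mu_1^\dagger$, $\pi({}^tY^\dagger)=\mu_2^\dagger$, together with $n_{Y^\dagger}=n_Y$, which follows from the invariance of conformal weights under the involution $\dagger$). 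Granting this compatibility, the grading argument isolates the finite-type diagrams as the only surviving contributions and identifies the surviving map with the natural inclusion, completing the proof.
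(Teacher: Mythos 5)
The paper itself gives no proof of Proposition \ref{factconf}: it is quoted from \cite{BP}, Proposition 4.4, specialised to the conformal pair $\pfr=\slfr(r)\times\slfr(l)\subset\gfr=\slfr(rl)$. Your reconstruction follows the same route as the argument in that reference --- realise the factorization isomorphism by inserting the $L_0$-graded coevaluation element at the two branch points, kill the non-admissible components by the branching rule, and use the conformal-weight shift $n_Y$ (with $n_Y=0$ iff $Y\in\YY_{r,l}^{fin}(\lambda_0)$, by the Skew Cauchy Formula) to separate the surviving components; the compatibility checks you list at the end ($Y$ pairs with $Y^{\dagger}$, $\pi(Y^\dagger)=\mu_1^\dagger$, $n_{Y^{\dagger}}=n_Y$) are exactly the right ones.

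The one step that is asserted rather than proved is the sentence ``the factorization matching at the node is governed by the lowest-weight (degree $0$) pieces''. Taken literally on the nodal curve this is not justified: the gauge condition at the node is satisfied only by the \emph{full} coevaluation $\sum_{d\geq 0}\Delta_d$, and by uniqueness of the invariant pairing on an irreducible module the $Y$-isotypic part of the full $\gfr$-coevaluation of $\cH_{\lambda_0,1}\otimes\cH_{\lambda_0^{\dagger},1}$ is a nonzero multiple of the full $\pfr$-coevaluation for \emph{every} $Y\in\YY_{r,l}^{aff}(\lambda_0)$, which would appear to give a nonzero component for every admissible triple. The vanishing for $n_Y>0$ only emerges from the sewing family over $\Spec\,\CC[[\tau]]$: the degree-$d$ term of a coevaluation is inserted with weight $\tau^{d+\Delta}$, where $\Delta$ is the conformal weight of the bottom of the module being sewn, and the two factorization isomorphisms are normalised by $\tau^{\Delta_{\lambda_0}}$ on the $\gfr$-side but by $\tau^{\Delta_{\mu_1}+\Delta_{\mu_2}}=\tau^{\Delta_{\lambda_0}+n_Y}$ on the $\pfr$-side; hence the $Y$-component of $\alpha$, read in the two trivialisations, carries the factor $\tau^{n_Y}$ and survives at $\tau=0$ exactly when $n_Y=0$, in which case it reduces to the map induced by the stated inclusion. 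Your ``degree-$0$'' heuristic is the correct output of this computation, but without the comparison of the two $\tau$-normalisations it is the conclusion restated rather than a proof; supplying that comparison (as in \cite{BP}) closes the argument.
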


\bigskip

We denote by 
$$SD(\lambda_0, \mu_1, \mu_2) : \cV_{\vec{\mu} \cup \mu_1 \cup \mu_1^\dagger ,l}  (\widetilde{C}, r) \lra
\cV_{\vec{\lambda} \cup \lambda_0 \cup \lambda_0^\dagger,1}(\widetilde{C}, rl) \otimes 
\cV^\dagger_{{}^t \vec{\mu} \cup \mu_2 \cup \mu_2^\dagger,r} (\widetilde{C}, l) $$
the linear map induced by $\alpha_{\lambda_0, \mu_1, \mu_2}$. If we are in the second case of
Proposition \ref{factconf}, i.e., if 
$(\lambda_0, \mu_1, \mu_2)$ is admissible and $Y \in \YY_{r,l}^{fin}(\lambda_0)$, then Proposition 
\ref{factconf} and the induction hypothesis applied to the curve $\widetilde{C}$
implies that the map $SD(\lambda_0, \mu_1, \mu_2)$ is injective. In all other cases this
map is zero. Now we observe that for any pair $(\lambda_0, \mu_2) \in P_1(rl) \times P_r(l)$
there exists at most one $\mu_1 \in P_l(r)$ such that the triple $(\lambda_0, \mu_1, \mu_2)$
is admissible. We introduce the set $S(\mu_1) = \{ (\lambda_0, \mu_2) \ |  \  (\lambda_0, \mu_1, \mu_2) \ 
\text{admissible} \}$. In order to show injectivity of $SD_{\vec{Y}}$ it is enough to show 
for each $\mu_1 \in P_l(r)$ injectivity of the map
$$
\cV_{\vec{\mu} \cup \mu_1 \cup \mu_1^\dagger ,l}  (\widetilde{C}, r) \lra
\bigoplus_{(\lambda_0, \mu_2) \in S(\mu_1)}
\cV_{\vec{\lambda} \cup \lambda_0 \cup \lambda_0^\dagger,1}(\widetilde{C}, rl) \otimes 
\cV^\dagger_{{}^t \vec{\mu} \cup \mu_2 \cup \mu_2^\dagger,r} (\widetilde{C}, l).
$$
But this follows from the fact that the map  $SD(\lambda_0, \mu_1, \mu_2)$ is injective
for an admissible triple with $Y \in \YY_{r,l}^{fin}(\lambda_0)$. Since $\pi :
\YY_{r,l}^{fin} \ra P_l(r)$ is surjective, we can find for each $\mu_1 \in P_l(r)$ such 
a pair $(\lambda_0, \mu_2) \in S(\mu_1)$. This completes the proof.

\bigskip

\end{document}